\documentclass[final,3p]{elsarticle}
\usepackage{lineno,hyperref}
\usepackage{amsmath}
\usepackage{amssymb}
\usepackage[linesnumbered,ruled,vlined]{algorithm2e}
\usepackage{caption}
\usepackage{mathtools}
\usepackage{changes}
\usepackage{multirow}
\usepackage{verbatim}
\usepackage{mathrsfs}
\usepackage{graphicx}
\usepackage{subcaption}
\usepackage{amsmath,amsthm,bm,mathrsfs}
\theoremstyle{plain}% Theorem-like structures provided by amsthm.sty
\newtheorem{theorem}{Theorem}[section]

\theoremstyle{definition}

\theoremstyle{remark}

\modulolinenumbers[5]

\journal{ArXiv.org}

\begin{document}

\begin{frontmatter}

\title{D-RADI: A Low-rank ADI Algorithm for Solving Large-scale Discrete-time Algebraic Riccati Equations}

\author[uz]{Umair~Zulfiqar\corref{mycorrespondingauthor}}
\cortext[mycorrespondingauthor]{Corresponding author}
\ead{umair@yangtzeu.edu.cn}
\address[uz]{School of Electronic Information and Electrical Engineering, Yangtze University, Jingzhou, Hubei, 434023, China}

\begin{abstract}
The low-rank alternating direction implicit (ADI) method is an efficient numerical technique for solving several types of large-scale matrix equations that admit low-rank solutions. The discrete-time algebraic Riccati equation (DARE) is an important matrix equation with applications in state estimation, controller design, and filter design. In the literature, the low-rank Cholesky factor ADI method for Stein equations has been used within Newton iterations to solve large-scale DAREs. However, no dedicated low-rank ADI solver is available for such DAREs. To address this gap, this paper presents a low-rank ADI solver for large-scale DAREs. We also propose an efficient approach to generate ADI shifts automatically, which makes the proposed solver fully autonomous for solving DAREs. The effectiveness of the proposed solver is compared with MATLAB's \texttt{idare} on a moderate-order problem. Efficiency and accuracy are further demonstrated on large-scale DAREs of order $10^6$. Numerical results confirm that the solver is efficient, accurate, and fully autonomous.
\end{abstract}

\begin{keyword}
ADI\sep Discrete-time\sep Low-rank\sep Projection\sep Rational interpolation\sep Riccati equation
\end{keyword}

\end{frontmatter}

%\linenumbers
\section{Introduction}
We consider the following discrete-time algebraic Riccati equation (DARE):
\begin{align}
A^\top Q A-E^\top Q E-\big(A^\top QB+C_2^\top\big)(B^\top Q B+R)^{-1}\big(A^\top QB+C_2^\top\big)^\top+C_1^\top ZC_1&=0,\label{dare}
\end{align}
where $Q=Q^\top\in\mathbb{R}^{n\times n}$, $E\in\mathbb{R}^{n\times n}$, $A\in\mathbb{R}^{n\times n}$, $B\in\mathbb{R}^{n\times m}$, $C_1\in\mathbb{R}^{p\times n}$, $C_2\in\mathbb{R}^{m\times n}$, $R=R^\top\in\mathbb{R}^{m\times m}$, and $Z=Z^\top\in\mathbb{R}^{p\times p}$. Assume $E$ and $R$ are invertible, and $R$ and $Z$ are indefinite.

Define the state-feedback gain $K$ as
\[
K=\big(B^\top Q B+R\big)^{-1}\big(B^\top Q A+C_2\big).
\]
Next, define the closed-loop state matrix $A_{cl}$ as
\[
A_{cl}=A-BK.
\]
Then $Q_\star$ is a stabilizing solution to the DARE \eqref{dare} if the matrix $E^{-1}A_{cl}$ is Schur stable, i.e.,
\[
|\lambda(E^{-1}A_{cl})|<1.
\]
We focus on computing the stabilizing solution to the DARE \eqref{dare}.

Let us define $\hat{A}$, $\hat{C}$, and $\hat{Z}$ as follows:
\[
\hat{A}=A-BR^{-1}C_2,\quad \hat{C}=\begin{bmatrix}C_1\\C_2\end{bmatrix},\quad \hat{Z}=\begin{bmatrix}Z&0\\0&-R^{-1}\end{bmatrix}.
\]
Then the DARE \eqref{dare} can be rewritten as
\[
\hat{A}^\top Q \hat{A}-E^\top Q E-\hat{A}^\top Q B(B^\top Q B+R)^{-1}B^\top Q\hat{A}+\hat{C}^\top\hat{Z}\hat{C}=0.
\]
When $m,p\ll n$, the solution to the DARE \eqref{dare} is typically numerically low-rank, which enables low-rank solution methods even when $n$ is large and direct methods are prohibitively expensive. Note that this is the DARE form solved by MATLAB’s \texttt{idare} and \texttt{dare}, but the applicability of these routines are limited to moderate $n$.
\section{Main Work}
Low-rank ADI methods are efficient solvers for large-scale matrix equations that admit low-rank solutions \cite{wachspress1988iterative,simoncini2016computational,benner2013numerical}. They have been extended to several types of linear and nonlinear matrix equations, including Lyapunov equations \cite{benner2013efficient,benner2013reformulated,zulfiqar2026new}, Sylvester equations \cite{benner2009adi,benner2014computing}, Stein equations \cite{benner2011numerical,zulfiqar2026}, continuous-time algebraic Riccati equations (CAREs) \cite{benner2018radi,bertram2024family,zulfiqar2025unified,zulfiqar2026ldl}, and nonsymmetric continuous-time algebraic Riccati equations (NCAREs) \cite{zulfiqar2026low}. However, to the best of our knowledge, no low-rank ADI algorithm exists for the DARE of the form \eqref{dare}. The low-rank Cholesky-factor ADI algorithm for Stein equations has been used within Newton iterations to compute the low-rank solution of a special case of DARE in \cite{benner2011numerical}. Unlike CAREs, however, DAREs of the form \eqref{dare} lack a low-rank ADI algorithm analogous to the Riccati ADI method (RADI) \cite{benner2018radi}. In this paper, we propose a discrete-time counterpart of RADI \cite{benner2018radi}, which produces a low-rank solution to the DARE \eqref{dare} in the form $Q\approx W_r^{(k)} Q_r^{(k)} (W_r^{(k)})^\top$, where $W_r^{(k)}\in\mathbb{R}^{n\times r}$, $Q_r^{(k)}\in\mathbb{R}^{r\times r}$, and $r=k(p+m)\ll n$.

As shown in our previous works \cite{zulfiqar2025unified,zulfiqar2026ldl,zulfiqar2026low,zulfiqar2026new,zulfiqar2026}, despite their different origins and developments, low-rank ADI methods are essentially Petrov–Galerkin projection-based recursive rational interpolation algorithms that interpolate at the mirror images of the ADI shifts. Various solvers differ mainly in their pole-placement properties, as elaborated below.

Let $\{\alpha_i\}_{i=1}^{k}$ be the ADI shifts used to obtain a low-rank approximation $Q \approx \hat{Q}^{(k)} = W_r^{(k)} Q_r^{(k)} (W_r^{(k)})^\top$, with $W_r^{(k)}$ satisfying
\begin{align}
\operatorname{span}_{i=1,\dots,k}\left\{(-\alpha_i E^\top - \hat{A}^\top)^{-1} \hat{C}^\top\right\} \subset \mathrm{Ran}(W_r^{(k)}). \label{int_prop}
\end{align}
Define the residual $R_q^{(k)}$ for the approximation $Q \approx \hat{Q}^{(k)}$ as
\[
R_q^{(k)} = \hat{A}^\top \hat{Q}^{(k)} \hat{A}-E^\top \hat{Q}^{(k)} E-\hat{A}^\top \hat{Q}^{(k)} B(B^\top \hat{Q}^{(k)} B+R)^{-1}B^\top \hat{Q}^{(k)}\hat{A}+\hat{C}^\top\hat{Z}\hat{C}.
\]
For some generally unknown matrix $V_r^{(k)}$ satisfying $(W_r^{(k)})^\top E V_r^{(k)} = I$, the residual of a general low-rank ADI method satisfies the Petrov–Galerkin projection condition $(V_r^{(k)})^\top R_q^{(k)} V_r^{(k)} = 0$. In other words, $Q_r^{(k)}$ is the solution to the following projected DARE:
\begin{align}
(\hat{A}_r^{(k)})^\top Q_r^{(k)} \hat{A}_r^{(k)}- Q_r^{(k)}-(\hat{A}_r^{(k)})^\top Q_r^{(k)} B_r^{(k)}\Big((B_r^{(k)})^\top Q_r^{(k)} B_r^{(k)}+R\Big)^{-1}(B_r^{(k)})^\top Q_r^{(k)}\hat{A}_r^{(k)}+(\hat{C}_r^{(k)})^\top\hat{Z}\hat{C}_r^{(k)}=0,\label{proj_dare}
\end{align}
where
\begin{align}
\hat{A}_r^{(k)}&=(W_r^{(k)})^\top \hat{A} V_r^{(k)}=A_r^{(k)}-B_r^{(k)}R^{-1}C_{2,r}^{(k)},\quad B_r^{(k)}=(W_r^{(k)})^\top B,\quad \hat{C}_r^{(k)}=\begin{bmatrix}C_{1,r}^{(k)}\\C_{2,r}^{(k)}\end{bmatrix}=\hat{C}V_r^{(k)}.
\end{align}
Define $G(z)$ and $G_r^{(k)}(z)$ as follows:
\[
G(z)=\hat{C}(zE-\hat{A})^{-1}B\quad \text{and}\quad G_r^{(k)}(z)=\hat{C}_r^{(k)}(zI-\hat{A}_r^{(k)})^{-1}B_r^{(k)},
\]
where $z=e^{j\omega}$.

Property \eqref{int_prop} implies the interpolation conditions
\begin{align}
G(-\alpha_i)=G_r^{(k)}(-\alpha_i)\label{int_cond}
\end{align}
for $i=1,\dots,k$ \cite{beattie2017chapter}.

Define the projected state-feedback gain $K_r^{(k)}$ as
\begin{align}
K_r^{(k)}=\big((B_r^{(k)})^\top Q_r^{(k)} B_r^{(k)}+R\big)^{-1}\big((B_r^{(k)})^\top Q_r^{(k)} A_r^{(k)}+C_{2,r}^{(k)}\big).
\end{align}
Define the projected closed-loop state matrix $A_{r,cl}^{(k)}$ as
\begin{align}
A_{r,cl}^{(k)}=A_r^{(k)}-B_r^{(k)}K_r^{(k)}.
\end{align}
The desired pole-placement property of the proposed low-rank ADI solver for the DARE \eqref{dare} is that the poles of $A_{r,cl}^{(k)}$ lie within the unit circle, ensuring that $Q_r^{(k)}$ is a stabilizing solution to the projected DARE \eqref{proj_dare}.

As noted in \cite{zulfiqar2025unified}, another key feature of existing low-rank ADI methods is that the solutions to the projected matrix equations are block diagonal, with each block corresponding to an ADI shift $\alpha_i$. Each diagonal block can be computed from analytical formulas, so the projected matrix equations are only implicitly solved. Existing low-rank ADI algorithms recursively accumulate the low-rank solution without updating the previous solution. The main computational ingredients are shifted linear solves and basic matrix operations. Next, we develop a low-rank ADI solver for the DARE \eqref{dare} that has all these properties.
\subsection{A Low-rank Solver for DARE \eqref{dare}}
Assume complex-valued ADI shifts occur in conjugate pairs, i.e., $\alpha_{i+1} = \overline{\alpha_i}$ whenever $\mathrm{Im}(\alpha_i) \neq 0$. For each ADI shift $\alpha_i$, define $s_r^{(i)}$ and $l_r^{(i)}$ as follows:
\begin{align}
s_r^{(i)} & = 
\begin{cases} 
-\alpha_i I_{pm}, \hspace*{5cm} \text{if } \mathrm{Im}(\alpha_i) = 0, \\
\begin{bmatrix} 
-\mathrm{Re}(\alpha_i)I_{pm} & -\mathrm{Im}(\alpha_i)I_{pm} \\ 
\mathrm{Im}(\alpha_i)I_{pm} & -\mathrm{Re}(\alpha_i) I_{pm}
\end{bmatrix},  \hspace*{1.6cm}\text{if } \mathrm{Im}(\alpha_i) \neq 0,
\end{cases}\label{sw}\\
l_r^{(i)} &= 
\begin{cases} 
-I_{pm}, \hspace*{5.4cm} \text{if } \mathrm{Im}(\alpha_i) = 0, \\
\begin{bmatrix} 
-I_{pm} & 0
\end{bmatrix}, \hspace*{4.5cm} \text{if } \mathrm{Im}(\alpha_i) \neq 0,
\end{cases}\label{lw}
\end{align}
where $I_{pm}$ denotes the identity matrix of size $(p+m) \times (p+m)$.

Define $S_r^{(i)}$, $(S_r^{(i)})^{-1}$, $L_r^{(i)}$, $X_r^{(i)}$, $W_r^{(i)}$, and $\hat{C}_{\perp}^{(i)}$ as follows:
\begin{align}
S_r^{(i)}&=\begin{bmatrix}S_r^{(i-1)}&S_{12}^{(i)}\\\mathbf{0}&s_r^{(i)}\end{bmatrix},\quad (S_r^{(i)})^{-1}=\begin{bmatrix}(S_r^{(i-1)})^{-1}&-(S_r^{(i-1)})^{-1}S_{12}^{(i)}(s_r^{(i)})^{-1}\\\mathbf{0}&(s_r^{(i)})^{-1}\end{bmatrix},\quad L_r^{(i)}=\begin{bmatrix}L_r^{(i-1)}&l_r^{(i)}\end{bmatrix},\nonumber\\
X_r^{(i)}&=\begin{bmatrix}X_r^{(i-1)}&\mathbf{0}\\\mathbf{0}&x_r^{(i)}\end{bmatrix},\quad W_r^{(i)}=\begin{bmatrix}W_r^{(i-1)}&w_r^{(i)}\end{bmatrix},\quad \hat{C}_{\perp}^{(i)}=\hat{C}-L_r^{(i)}(S_r^{(i)})^{-1}(X_r^{(i)})^{-1}(W_r^{(i)})^\top E,
\end{align}
where
\begin{align}
S_{12}^{(i)}=(X_r^{(i-1)})^{-1}(S_r^{(i-1)})^{-\top}\Big((L_r^{(i-1)})^\top\hat{Z}l_r^{(i)}+(W_r^{(i-1)})^\top BR^{-1}B^\top w_r^{(i)}\Big),\\
(s_r^{(i)})^\top x_r^{(i)} s_r^{(i)}- x_r^{(i)}-(l_r^{(i)})^\top \hat{Z}l_r^{(i)}-(w_r^{(i)})^\top BR^{-1}B^\top w_r^{(i)}+(S_{12}^{(i)})^\top X_r^{(i-1)}S_{12}^{(i)}=0,\label{small_x_r}\\
\Big(\hat{A}^\top-E^\top W_r^{(i-1)}(X_r^{(i-1)})^{-1}(S_r^{(i-1)})^{-\top}(W_r^{(i-1)})^\top BR^{-1}B^\top\Big) w_r^{(i)}-E^\top w_r^{(i)}s_r^{(i)}+(\hat{C}_{\perp}^{(i-1)})^\top \hat{Z}l_r^{(i)}=0.\label{small_w_r}
\end{align}
By substitution, $W_r^{(i)}$ and $X_r^{(i)}$ solve the following equations:
\begin{align}
\hat{A}^\top W_r^{(i)}-E^\top W_r^{(i)}S_r^{(i)}+\hat{C}^\top \hat{Z}L_r^{(i)}&=0,\label{sylv_W}\\
(S_r^{(i)})^\top X_r^{(i)}S_r^{(i)}-X_r^{(i)}-(L_r^{(i)})^\top \hat{Z} L_r^{(i)}-(W_r^{(i)})^\top BR^{-1}B^\top W_r^{(i)}&=0.\label{lyap_x}
\end{align}
Moreover, $\hat{C}_{\perp}^{(i)}$ can be recursively computed as
\begin{align}
\hat{C}_{\perp}^{(i)}=\hat{C}_{\perp}^{(i-1)}-\Bigg(\Big(l_r^{(i)}-L_r^{(i-1)}(S_r^{(i-1)})^{-1}S_{12}^{(i)}\Big)(s_r^{(i)})^{-1}(x_r^{(i)})^{-1}\Bigg)(w_r^{(i)})^\top E.\label{C_perp}
\end{align}
Due to the block triangular structure of $S_r^{(i)}$, its eigenvalues are $-\alpha_1,\dots,-\alpha_i$, each with multiplicity $p+m$. By the connection between Sylvester equations and rational Krylov subspaces established in \cite{gallivan2004sylvester}, $W_r^{(i)}$ satisfies property \eqref{int_prop}.

Let $V_r^{(i)}$ satisfy $(W_r^{(i)})^\top E V_r^{(i)} = I$. Pre-multiplying \eqref{sylv_W} by $V_r^{(i)}$ gives
\begin{align}
(\hat{A}_r^{(i)})^\top-S_r^{(i)}+(\hat{C}_r^{(i)})^\top\hat{Z}L_r^{(i)}=0.\nonumber
\end{align}
Consequently, $\hat{A}_r^{(i)} = (S_r^{(i)})^\top - (L_r^{(i)})^\top \hat{Z} \hat{C}_r^{(i)}$ can be parameterized in terms of $\hat{C}_r^{(i)}$. If the pair $(S_r^{(i)},\hat{Z}L_r^{(i)})$ is observable, this parameterization does not affect the interpolation condition \eqref{int_cond}, since it is equivalent to varying $V_r^{(i)}$; cf. \cite{wolfthesis,panzerthesis,astolfi2010model}. The following theorem shows that a specific choice of $\hat{C}_r^{(i)}$ guarantees that $Q_r^{(i)}=(X_r^{(i)})^{-1}$ is a stabilizing solution to the projected DARE \eqref{proj_dare}.

\begin{theorem}\label{th1}
Let the ADI shifts $\alpha_i$ lie outside the unit circle, i.e., $|\alpha_i|>1$. Let $W_r^{(i)}$ solve the Sylvester equation \eqref{sylv_W} with $s_r^{(i)}$, $l_r^{(i)}$, $x_r^{(i)}$, $w_r^{(i)}$, $S_r^{(i)}$, $L_r^{(i)}$, $S_{12}^{(i)}$, $\hat{C}_{\perp}^{(i)}$, and $X_r^{(i)}$ defined in \eqref{sw}--\eqref{C_perp}. Assume there exists a matrix $V_r^{(i)}$ satisfying $(W_r^{(i)})^\top E V_r^{(i)}=I$, so that $\hat{A}_r^{(i)} = (W_r^{(i)})^\top \hat{A} V_r^{(i)} = (S_r^{(i)})^\top - (L_r^{(i)})^\top \hat{Z} \hat{C}_r^{(i)}$, $B_r^{(i)}=(W_r^{(i)})^\top B$, and $\hat{C}_r^{(i)} =\hat{C}V_r^{(i)}$. Assume that the pair $(S_r^{(i)},\hat{Z}L_r^{(i)})$ is observable, and that the matrices $X_r^{(i)}$ and $R+(B_r^{(i)})^\top (X_r^{(i)})^{-1} B_r^{(i)}$ are invertible. If the free parameter $\hat{C}_r^{(i)}$ is chosen as
\[
\hat{C}_r^{(i)} =\begin{bmatrix}C_{1,r}^{(i)}\\C_{2,r}^{(i)}\end{bmatrix}= \begin{bmatrix} \hat{c}_r^{(1)} & \cdots & \hat{c}_r^{(i)} \end{bmatrix} = L_r^{(i)}\big( S_r^{(i)}\big)^{-1}\big(X_r^{(i)}\big)^{-1},
\]
with $\hat{c}_r^{(i)}=\Big(l_r^{(i)}-L_r^{(i-1)}\big(S_r^{(i-1)}\big)^{-1}S_{12}^{(i)}\Big)(s_r^{(i)})^{-1}(x_r^{(i)})^{-1}$,
the following statements hold:
\begin{enumerate}
  \item $Q_r^{(i)}=(X_r^{(i)})^{-1}$ is a stabilizing solution to the projected DARE \eqref{proj_dare}. The closed-loop matrix
  \begin{align}
  A_{r,cl}^{(i)}=A_r^{(i)}-B_r^{(i)}\Big((B_r^{(i)})^\top Q_r^{(i)} B_r^{(i)}+R\Big)^{-1}\Big((B_r^{(i)})^\top Q_r^{(i)} A_r^{(i)}+C_{2,r}^{(i)}\Big),
  \end{align}
  has eigenvalues $-\frac{1}{\alpha_1},\dots,-\frac{1}{\alpha_i}$, each with multiplicity $p+m$.
  \item The residual $R_q^{(i)}$, which satisfies the Petrov–Galerkin projection condition $(V_r^{(i)})^\top R_q^{(i)} V_r^{(i)} = 0$, is given by
  \[
  R_q^{(i)}=\big(\hat{C}_{\perp}^{(i)}\big)^\top \hat{Z}\big(M^{(i)}\big)^{-1} \hat{C}_{\perp}^{(i)},
  \]
  where $M^{(i)}=I_{pm}-\hat{C}_r^{(i)}X_r^{(i)}\big(\hat{C}_r^{(i)}\big)^\top\hat{Z}=M^{(i-1)}-\hat{c}_r^{(i)}x_r^{(i)}(\hat{c}_r^{(i)})^\top\hat{Z}$ with $M^{(0)}=I_{pm}$.
  \item $W_r^{(i)}$ solves the Sylvester equation
  \begin{align}
  \hat{A}^\top W_r^{(i)} - E^\top W_r^{(i)} (\hat{A}_r^{(i)})^\top + (\hat{C}_{\perp}^{(i)})^\top \hat{Z} L_r^{(i)} = 0.\label{radi_sylv2}
  \end{align}
\end{enumerate}
\end{theorem}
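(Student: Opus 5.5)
The plan is to reduce everything to the small Stein-type equation \eqref{lyap_x}. Throughout, write $S=S_r^{(i)}$, $L=L_r^{(i)}$, $X=X_r^{(i)}$, $W=W_r^{(i)}$, $B_r=W^\top B$, and $\mathcal{B}=B_rR^{-1}B_r^\top$. Equation \eqref{lyap_x} then reads $L^\top\hat{Z}L=S^\top XS-X-\mathcal{B}$. The rest of the argument uses only this identity, the chosen $\hat{C}_r^{(i)}=LS^{-1}X^{-1}$, and the parameterization $\hat{A}_r^{(i)}=S^\top-L^\top\hat{Z}\hat{C}_r^{(i)}$.

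\textbf{Step 1 (closed-form projected matrices).} Substituting the choice of $\hat{C}_r^{(i)}$ into the parameterization gives
$\hat{A}_r^{(i)}=S^\top-(S^\top XS-X-\mathcal{B})S^{-1}X^{-1}=(X+\mathcal{B})S^{-1}X^{-1}=(I+\mathcal{B}X^{-1})\,XS^{-1}X^{-1}$.
I will use the standard equivalence between the original form \eqref{proj_dare} with $(A_r,C_{2,r},R)$ and the $\hat{A}$-form. Under it, the closed-loop matrix $A_{r,cl}^{(i)}$ equals $\hat{A}_r^{(i)}-B_r(B_r^\top QB_r+R)^{-1}B_r^\top Q\hat{A}_r^{(i)}$. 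This is a routine check: expand $K_r^{(i)}$ and use $A_r=\hat{A}_r+B_rR^{-1}C_{2,r}$.

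\textbf{Step 2 (item 1).} By the Woodbury identity, the closed-loop matrix above equals $(I+\mathcal{B}Q)^{-1}\hat{A}_r^{(i)}$ with $Q=X^{-1}$. Using Step 1, this is $A_{r,cl}^{(i)}=XS^{-1}X^{-1}$. Its eigenvalues are the reciprocals of those of $S$, namely $-1/\alpha_j$ with multiplicity $p+m$. Since $|\alpha_j|>1$, these lie inside the unit circle.

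To verify the projected DARE, I write it as $(\hat{A}_r^{(i)})^\top Q A_{r,cl}^{(i)}-Q+(\hat{C}_r^{(i)})^\top\hat{Z}\hat{C}_r^{(i)}=0$. Substitution turns the left-hand side into $X^{-1}S^{-\top}\big[X+\mathcal{B}-S^\top XS+L^\top\hat{Z}L\big]S^{-1}X^{-1}$, which vanishes by \eqref{lyap_x}. Hence $Q_r^{(i)}=(X_r^{(i)})^{-1}$ is a stabilizing solution.

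\textbf{Step 3 (item 3).} From $(\hat{A}_r^{(i)})^\top=S-(\hat{C}_r^{(i)})^\top\hat{Z}L$ and the definition $\hat{C}_\perp^{(i)}=\hat{C}-\hat{C}_r^{(i)}W^\top E$, the term $E^\top W(\hat{C}_r^{(i)})^\top\hat{Z}L$ appears with opposite signs and cancels. What remains of \eqref{radi_sylv2} is exactly \eqref{sylv_W}.

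\textbf{Step 4 (item 2, main obstacle).} Write $\hat{Q}^{(i)}=WX^{-1}W^\top$. Woodbury gives
$\hat{A}^\top\hat{Q}\hat{A}-\hat{A}^\top\hat{Q}B(B^\top\hat{Q}B+R)^{-1}B^\top\hat{Q}\hat{A}=\hat{A}^\top W(X+\mathcal{B})^{-1}W^\top\hat{A}$.
Next I substitute $\hat{A}^\top W=E^\top W(\hat{A}_r^{(i)})^\top-(\hat{C}_\perp^{(i)})^\top\hat{Z}L$ from item 3, and $\hat{C}=\hat{C}_\perp^{(i)}+\hat{C}_r^{(i)}W^\top E$. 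The residual then splits into three groups:
\begin{itemize}
\item an $E^\top W(\cdot)W^\top E$ block, which is the projected DARE and so vanishes by Step 2;
\item cross terms, which should cancel;
\item a $\hat{C}_\perp^\top(\cdot)\hat{C}_\perp$ block equal to $(\hat{C}_\perp^{(i)})^\top\big[\hat{Z}+\hat{Z}L(X+\mathcal{B})^{-1}L^\top\hat{Z}\big]\hat{C}_\perp^{(i)}$.
\end{itemize}
For the cross terms, I use $\hat{A}_r(X+\mathcal{B})^{-1}=S^{-1}X^{-1}$ from Step 1. The cross term then becomes $-E^\top WX^{-1}S^{-\top}L^\top\hat{Z}\hat{C}_\perp$, which cancels against the cross term $E^\top W(\hat{C}_r)^\top\hat{Z}\hat{C}_\perp$ coming from $\hat{C}^\top\hat{Z}\hat{C}$.

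The hardest part is identifying the middle factor with $\hat{Z}(M^{(i)})^{-1}$. I plan to compute $\big(I-LS^{-1}X^{-1}S^{-\top}L^\top\hat{Z}\big)^{-1}$ by Woodbury, obtaining $I+L\,(S^\top XS-L^\top\hat{Z}L)^{-1}L^\top\hat{Z}$, and then use $S^\top XS-L^\top\hat{Z}L=X+\mathcal{B}$. The recursion $M^{(i)}=M^{(i-1)}-\hat{c}_r^{(i)}x_r^{(i)}(\hat{c}_r^{(i)})^\top\hat{Z}$ follows from the block-diagonal structure of $X$ together with the column partition of $\hat{C}_r^{(i)}$. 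The Petrov--Galerkin condition then holds because $\hat{C}_\perp^{(i)}V_r^{(i)}=\hat{C}_r^{(i)}-\hat{C}_r^{(i)}=0$.
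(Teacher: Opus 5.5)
Your proposal is correct and follows essentially the same route as the paper's proof. Both arguments reduce everything to \eqref{lyap_x} through the identity $(X+\mathcal{B})^{-1}\hat{A}_r^{(i)}=S^{-1}X^{-1}$, obtain the closed loop $XS^{-1}X^{-1}$ via Woodbury, and split the residual into projected-DARE, cross, and $\hat{C}_\perp$ blocks. Your direct Woodbury inversion of $M^{(i)}$ is only a tidier variant of the paper's argument, and it also gives the invertibility of $M^{(i)}$, which the paper leaves implicit.

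One slip to fix in Step 4. The identity the cross term needs is the transpose of Step 1's, namely $(\hat{A}_r^{(i)})^\top(X+\mathcal{B})^{-1}=X^{-1}S^{-\top}$, not $\hat{A}_r^{(i)}(X+\mathcal{B})^{-1}=S^{-1}X^{-1}$ as you wrote. The cross-term expression you then derive is nevertheless correct.
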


\begin{proof}
The proof is given in the appendix.
\end{proof}
\subsection{Algorithm}
The low-rank solver developed in the previous subsection is recursive and thus has all the features of a low-rank ADI solver mentioned earlier in this section. It is straightforward to show that when $B=\mathbf{0}$, $R=I$, and $C_2=\mathbf{0}$, the DARE \eqref{dare} reduces to a Stein equation. In this case, the proposed solver reduces to the low-rank ADI solver for Stein equations proposed in \cite{benner2011numerical} and \cite{benner2014computing}, and Theorem \ref{th1} reduces to the results derived in Propositions 2.1 and 2.3 and Theorem 2.2 of \cite{zulfiqar2026}.

The matrix $w_r^{(i)}$ requires one shifted linear solve, while $x_r^{(i)}$ is obtained analytically, without solving the Stein equation \eqref{small_x_r}, as follows.

If $\operatorname{Im}(\alpha_i)=0$, then
\begin{equation}
x_r^{(i)}
=
\frac{
\hat{Z}
+
(w_r^{(i)})^\top B R^{-1} B^\top w_r^{(i)}
-
(S_{12}^{(i)})^\top X_r^{(i-1)} S_{12}^{(i)}
}{
\alpha_i^2-1
}.
\label{eq:x_real}
\end{equation}

If $\operatorname{Im}(\alpha_i)\neq 0$, decompose $w_r^{(i)}$ as $w_r^{(i)}=\begin{bmatrix}w_{\mathrm{re}}^{(i)}&w_{\mathrm{im}}^{(i)}\end{bmatrix}$, define
\[
b_{\mathrm{re}}^{(i)}
:=
(w_{\mathrm{re}}^{(i)})^\top B,
\qquad
b_{\mathrm{im}}^{(i)}
:=
(w_{\mathrm{im}}^{(i)})^\top B,
\]
and partition
\[
(S_{12}^{(i)})^\top X_r^{(i-1)} S_{12}^{(i)}
=
\begin{bmatrix}
Y_{11}^{(i)} & Y_{12}^{(i)}\\
Y_{21}^{(i)} & Y_{22}^{(i)}
\end{bmatrix},
\]
where each block has size $(p+m)\times(p+m)$. Set
\begin{equation}
\begin{aligned}
N_{11}^{(i)}
&:=
Y_{11}^{(i)}
-
\hat{Z}
-
b_{\mathrm{re}}^{(i)}R^{-1}(b_{\mathrm{re}}^{(i)})^\top,\\
N_{12}^{(i)}
&:=
Y_{12}^{(i)}
-
b_{\mathrm{re}}^{(i)}R^{-1}(b_{\mathrm{im}}^{(i)})^\top,\\
N_{21}^{(i)}
&:=
Y_{21}^{(i)}
-
b_{\mathrm{im}}^{(i)}R^{-1}(b_{\mathrm{re}}^{(i)})^\top,\\
N_{22}^{(i)}
&:=
Y_{22}^{(i)}
-
b_{\mathrm{im}}^{(i)}R^{-1}(b_{\mathrm{im}}^{(i)})^\top.
\end{aligned}
\label{eq:x_complex_blocks}
\end{equation}
Define the complex-valued matrices
\begin{equation}
\Xi^{(i)}
:=
-\frac{
N_{11}^{(i)}+N_{22}^{(i)}
+
\mathrm{j}\bigl(N_{21}^{(i)}-N_{12}^{(i)}\bigr)
}{
|\alpha_i|^2-1
},
\qquad
\Upsilon^{(i)}
:=
-\frac{
N_{11}^{(i)}-N_{22}^{(i)}
+
\mathrm{j}\bigl(N_{12}^{(i)}+N_{21}^{(i)}\bigr)
}{
\alpha_i^2-1
}.
\label{eq:x_complex_aux}
\end{equation}
Then
\begin{equation}
x_r^{(i)}
=
\frac{1}{2}
\begin{bmatrix}
\operatorname{Re}(\Xi^{(i)})+\operatorname{Re}(\Upsilon^{(i)})
&
\operatorname{Im}(\Upsilon^{(i)})-\operatorname{Im}(\Xi^{(i)})
\\[2mm]
\operatorname{Im}(\Upsilon^{(i)})+\operatorname{Im}(\Xi^{(i)})
&
\operatorname{Re}(\Xi^{(i)})-\operatorname{Re}(\Upsilon^{(i)})
\end{bmatrix}.
\label{eq:x_complex}
\end{equation}

These formulas yield a recursive implementation of the solver developed in the previous subsection. The pseudo-code of the resulting solver, named Discrete-time RADI (D-RADI), is given in Algorithm \ref{alg1}. The main computational step is the shifted linear solve in Step \ref{step1}, which can be performed efficiently using the Sherman–Morrison–Woodbury (SMW) formula \cite{golub2013matrix}, as done in the continuous-time counterparts \cite{benner2018radi} and \cite{zulfiqar2026ldl}. A MATLAB-based implementation of D-RADI is publicly available at \cite{mycode}.
\begin{algorithm}[!t]
\caption{D-RADI}\label{alg1}
\DontPrintSemicolon
\KwIn{
  Matrices of DARE \eqref{dare}: $E$, $A$, $B$, $R$, $C_1$, $Z$, $C_2$; ADI shifts: $\{\alpha_i\}_{i=1}^k$ satisfying $|\alpha_i|>1$; Tolerance: $\tau\in[0,1]$.
}
\KwOut{
  Approximation of $Q$: $Q \approx W_r^{(i)}Q_r^{(i)}(W_r^{(i)})^\top$; Approximation of gain matrix $K$: $K\approx \hat{K}^{(i)}=\big(B^\top W_r^{(i)}Q_r^{(i)}(W_r^{(i)})^\top B+R\big)^{-1}\big(B^\top W_r^{(i)}Q_r^{(i)}(W_r^{(i)})^\top A+C_2\big)$; Residual $R_q^{(i)}$: $R_q^{(i)}=\big(\hat{C}_{\perp}^{(i)}\big)^\top \hat{Z}\big(M^{(i)}\big)^{-1} \hat{C}_{\perp}^{(i)}$.
}

\BlankLine
\textbf{Initialization:}
$\hat{A}=A-BR^{-1}C_2$, $\hat{C} = \begin{bmatrix}C_1 \\ C_2\end{bmatrix}$, $\hat{Z} = \mathrm{blkdiag}(Z, -R^{-1})$, $\hat{C}_{\perp}^{(0)} = \hat{C}$, $W_r^{(0)} = [\;]$, $X_r^{(0)} = [\;]$, $Q_r^{(0)} = [\;]$, $S_r^{(0)}=[\;]$, $L_r^{(0)}=[\;]$, $\big(S_r^{(0)}\big)^{-1}=[\;]$, $S_{12}^{(0)}=[\;]$, $K_b^{(0)} = \mathbf{0}$, $K_1^{(0)}=R$, $K_2^{(0)}=C_2$, $M^{(0)}=I_{pm}$, $B_r^{(0)}=[\;]$, $\hat{C}_r^{(0)}=[\;]$, $i = 1$.

\BlankLine
\While{$\dfrac{\big\|\big(\hat{C}_{\perp}^{(i-1)}\big)^\top \hat{Z}\big(M^{(i-1)}\big)^{-1} \hat{C}_{\perp}^{(i-1)}\big\|}{\big\|\hat{C}^\top\hat{Z}\hat{C}\big\|} \geq \tau$}{
  Solve for $v_i$: $\big(\hat{A}^\top - (K_b^{(i-1)})^\top B^\top +\alpha_i E^\top \big) v_i = \big(\hat{C}_{\perp}^{(i-1)}\big)^\top$.\label{step1}
   
  \uIf{$\mathrm{Im}(\alpha_i) = 0$}{
    Set $w_r^{(i)} = v_i \hat{Z}$, $b_r^{(i)}=\big(w_r^{(i)}\big)^\top B$, $s_r^{(i)}=-\alpha_iI_{pm}$, $l_r^{(i)}=-I_{pm}$, and $S_{12}^{(i)}=\big(\hat{C}_r^{(i-1)}\big)^\top\hat{Z}l_r^{(i)}+Q_r^{(i-1)}\big(S_r^{(i-1)}\big)^{-\top}B_r^{(i-1)}R^{-1}\big(b_r^{(i)}\big)^\top$. \\
    Compute $x_r^{(i)}$ from \eqref{eq:x_real}, set $q_r^{(i)}=\big(x_r^{(i)}\big)^{-1}$, and $\hat{c}_r^{(i)}=\Big(l_r^{(i)}-L_r^{(i-1)}\big(S_r^{(i-1)}\big)^{-1}S_{12}^{(i)}\Big)(s_r^{(i)})^{-1}q_r^{(i)}$.\\
    Update $\hat{C}_{\perp}^{(i)} = \hat{C}_{\perp}^{(i-1)} - \hat{c}_r^{(i)}\big(w_r^{(i)}\big)^\top E$, $K_b^{(i)} = K_b^{(i-1)} + R^{-1}\Big(\big(b_r^{(i)}\big)^\top-\big(B_r^{(i-1)}\big)^\top \big(S_r^{(i-1)}\big)^{-1}S_{12}^{(i)}\Big)\big(s_r^{(i)}\big)^{-1}q_r^{(i)}\big(w_r^{(i)}\big)^\top E$, $K_1^{(i)}=K_1^{(i-1)}+\big(b_r^{(i)}\big)^\top q_r^{(i)}b_r^{(i)}$, $K_2^{(i)}=K_2^{(i-1)}+\big(b_r^{(i)}\big)^\top q_r^{(i)}\big(w_r^{(i)}\big)^\top A$, and $M^{(i)}=M^{(i-1)}-\hat{c}_r^{(i)}x_r^{(i)}\big(\hat{c}_r^{(i)}\big)^\top \hat{Z}$.\label{step7}\\
    Expand $X_r^{(i)} = \mathrm{blkdiag}\big(X_r^{(i-1)}, x_r^{(i)}\big)$, $Q_r^{(i)} = \mathrm{blkdiag}\big(Q_r^{(i-1)}, q_r^{(i)}\big)$, $W_r^{(i)} = \begin{bmatrix} W_r^{(i-1)} & w_r^{(i)}\end{bmatrix}$, $B_r^{(i)}=\begin{bmatrix}B_r^{(i-1)}\\b_r^{(i)}\end{bmatrix}$, $\hat{C}_r^{(i)}=\begin{bmatrix}\hat{C}_r^{(i-1)}&\hat{c}_r^{(i)}\end{bmatrix}$, $S_r^{(i)}=\begin{bmatrix}S_r^{(i-1)}&S_{12}^{(i)}\\\mathbf{0}&s_r^{(i)}\end{bmatrix}$, $\big(S_r^{(i)}\big)^{-1}=\begin{bmatrix}\big(S_r^{(i-1)}\big)^{-1}&-\big(S_r^{(i-1)}\big)^{-1}S_{12}^{(i)}(s_r^{(i)})^{-1}\\\mathbf{0}&(s_r^{(i)})^{-1}\end{bmatrix}$, and $L_r^{(i)}=\begin{bmatrix}L_r^{(i-1)}&l_r^{(i)}\end{bmatrix}$.\label{step8}\\
    Increment $i\gets i+1$.
  }
  \uElse{
    Set $w_r^{(i)} = \begin{bmatrix}w_{\mathrm{re}}^{(i)}&w_{\mathrm{im}}^{(i)}\end{bmatrix} = \begin{bmatrix} \mathrm{Re}(v_i\hat{Z}) & \mathrm{Im}(v_i\hat{Z}) \end{bmatrix}$, $b_r^{(i)}=\begin{bmatrix}b_{\mathrm{re}}^{(i)}\\b_{\mathrm{im}}^{(i)}\end{bmatrix}=\big(w_r^{(i)}\big)^\top B$, $s_r^{(i)}=\begin{bmatrix} 
-\mathrm{Re}(\alpha_i)I_{pm} & -\mathrm{Im}(\alpha_i)I_{pm} \\ 
\mathrm{Im}(\alpha_i)I_{pm} & -\mathrm{Re}(\alpha_i) I_{pm}
\end{bmatrix}$, $l_r^{(i)}=\begin{bmatrix}-I_{pm}&\mathbf{0}\end{bmatrix}$, and $S_{12}^{(i)}=\big(\hat{C}_r^{(i-1)}\big)^\top\hat{Z}l_r^{(i)}+Q_r^{(i-1)}\big(S_r^{(i-1)}\big)^{-\top}B_r^{(i-1)}R^{-1}\big(b_r^{(i)}\big)^\top$.\\
    Compute $x_r^{(i)}$ from \eqref{eq:x_complex_blocks}-\eqref{eq:x_complex}, set $q_r^{(i)}=\big(x_r^{(i)}\big)^{-1}$, and $\hat{c}_r^{(i)}=\Big(l_r^{(i)}-L_r^{(i-1)}\big(S_r^{(i-1)}\big)^{-1}S_{12}^{(i)}\Big)(s_r^{(i)})^{-1}q_r^{(i)}$. \\
    Execute Steps \ref{step7} and \ref{step8}.\\
    Increment $i\gets i+2$.
  }
  
}
Set $\hat{K}^{(i)}=\big(K_1^{(i)}\big)^{-1}K_2^{(i)}$.
\end{algorithm}
\subsection{Automatic Shift Generation}
D-RADI is a recursive interpolation-based model order reduction (MOR) algorithm. The error analysis of MOR algorithms that satisfy Sylvester equations of the forms \eqref{sylv_W} and \eqref{radi_sylv2} has been carried out extensively in \cite{wolfthesis} and \cite{panzerthesis}; we follow it in this subsection.

Define $G_{\perp}^{(i)}(z)$ as follows:
\begin{align}
G_{\perp}^{(i)}(z)=\hat{C}_{\perp}^{(i)}(zE-\hat{A})^{-1}B.
\end{align}
The error $G(z)-G_r^{(i)}(z)$ factorizes as
\begin{align}
G(z)-G_r^{(i)}(z)&=\Big(-\hat{C}_r^{(i)}\big(zI-\hat{A}_r^{(i)}\big)^{-1}(L_r^{(i)})^\top+I\Big)G_{\perp}^{(i)}(z);
\end{align}
cf. \cite{wolfthesis}.

Lemma 3.3 of \cite{wolfthesis} shows that this error is zero when all the poles of the realization $(E,\hat{A},B,\hat{C}_{\perp}^{(i)})$ are unobservable. The $\mathcal{H}_2$ norm error is dominated by the most controllable and observable poles of $G(z)$. If $G_r^{(i)}(z)$ interpolates $G(z)$ at the reciprocals of these poles, the error decreases rapidly \cite{bunse2010h2,gugercin2008h_2}. Although $G(z)$ and $G_{\perp}^{(i)}(z)$ have the same poles, the poles already captured by $G_r^{(i)}(z)$ become weakly observable or unobservable in $G_{\perp}^{(i)}(z)$. Therefore, the mirror images of the reciprocals of the dominant poles of $G_{\perp}^{(i)}(z)$ can be used as ADI shifts in D-RADI to approximate $G(z)$ accurately, since D-RADI interpolates at the mirror images of the ADI shifts.

In large-scale settings, computing the poles of $G_{\perp}^{(i)}(z)$ is infeasible. Instead, Ritz values of $E^{-1}\hat{A}$ can be computed by projecting onto the interpolation basis accumulated during the D-RADI iterations.

Set 
\[
W_{\mathrm{proj}} = \mathrm{orth}\Big(\begin{bmatrix} w_r^{(1)} & \cdots & w_r^{(i)} \end{bmatrix}\Big)
\]
with implicit restart: if the number of columns of $W_{\mathrm{proj}}$ exceeds a prescribed limit, the previous vectors are discarded and a new basis is accumulated. Such restart mechanisms are commonly used in eigenvalue algorithms to limit the basis dimension \cite{saad2011numerical}. Then define the projected matrices:
\[
E_{\mathrm{proj}} = W_{\mathrm{proj}}^\top E W_{\mathrm{proj}}, \quad
A_{\mathrm{proj}} = W_{\mathrm{proj}}^\top \hat{A} W_{\mathrm{proj}}, \quad
C_{\mathrm{proj}} = \hat{C}_{\perp}^{(i)} W_{\mathrm{proj}}.
\]
Compute the eigenvalue decomposition of $A_{\mathrm{proj}}E_{\mathrm{proj}}^{-1}$:
\[
A_{\mathrm{proj}}E_{\mathrm{proj}}^{-1} = T\,\mathrm{diag}\big(\lambda_1,\dots,\lambda_r\big)T^{-1}.
\]
Define $r_{c,l} = C_{\mathrm{proj}} T(:,l)$. The most observable pole $\lambda_{\mathrm{dom}}$ of $A_{\mathrm{proj}}E_{\mathrm{proj}}^{-1}$ is the pole $\lambda_l$ corresponding to the largest normalized residue (a standard measure in modal analysis \cite{gawronski2004dynamics})
\[
\phi_l = \frac{\|r_{c,l}\|_2^2}{1-|\lambda_l|^2}.
\]
The next shift is
\[
\alpha_{i+1}=-\frac{1}{\lambda_{\mathrm{dom}}}.
\]
If all the poles $\lambda_l$ lie outside the unit circle, the previous shift is repeated. Thus, after initialization with an arbitrary shift, D-RADI generates subsequent shifts without user intervention.
\section{Numerical Results}
In this section, the numerical performance of D-RADI is assessed on one moderate-order DARE, allowing comparison with MATLAB's \texttt{idare}. Then, two large-scale DAREs of order $10^6$ are considered, which are prohibitively expensive for MATLAB's \texttt{idare} but feasible for D-RADI. MATLAB codes for reproducing the results in this section are publicly available at \cite{mycode}. The first shift is set to $\alpha_1=2$ in all experiments. All experiments are performed in MATLAB R2025b on a Windows 11 laptop with 32 GB RAM and an Intel(R) Core(TM) Ultra 9 285H processor running at 2.9 GHz.

\subsection{Example 1: Comparison with MATLAB's \texttt{idare} Function}
The matrices $E\in\mathbb{R}^{200\times 200}$, $A\in\mathbb{R}^{200\times 200}$, $B\in\mathbb{R}^{200\times 1}$, and $C_1\in\mathbb{R}^{1\times200}$ are the state-space matrices of the heat transfer model taken from the benchmark collection in \cite{chahlaoui2005benchmark}. The matrices $C_2$, $R$, and $Z$ are set as
\[
C_2=\begin{bmatrix}1&\mathbf{0}_{1\times199}\end{bmatrix},\quad R=-0.1,\quad \text{and}\quad Z=-0.5.
\]
The DARE \eqref{dare} is solved using MATLAB's \texttt{idare} routine, and the numerical rank of $Q$, checked using MATLAB's \texttt{rank} command, was $66$. The tolerance $\tau$ in D-RADI is set to MATLAB's floating-point relative accuracy \texttt{eps}. The maximum number of columns for $W_{\mathrm{proj}}$ is set to $20$. D-RADI converged in the $43^{\text{rd}}$ iteration. The normalized residual history
\[
\dfrac{\|\hat{A}^\top \hat{Q}^{(i)} \hat{A}-E^\top \hat{Q}^{(i)} E-\hat{A}^\top \hat{Q}^{(i)} B(B^\top \hat{Q}^{(i)} B+R)^{-1}B^\top \hat{Q}^{(i)}\hat{A}+\hat{C}^\top\hat{Z}\hat{C}\|_2}{\|\hat{C}^\top\hat{Z}\hat{C}\|_2}
\]
is plotted in Figure \ref{fig0}.
\begin{figure}[!h]
  \centering
  \includegraphics[width=10cm]{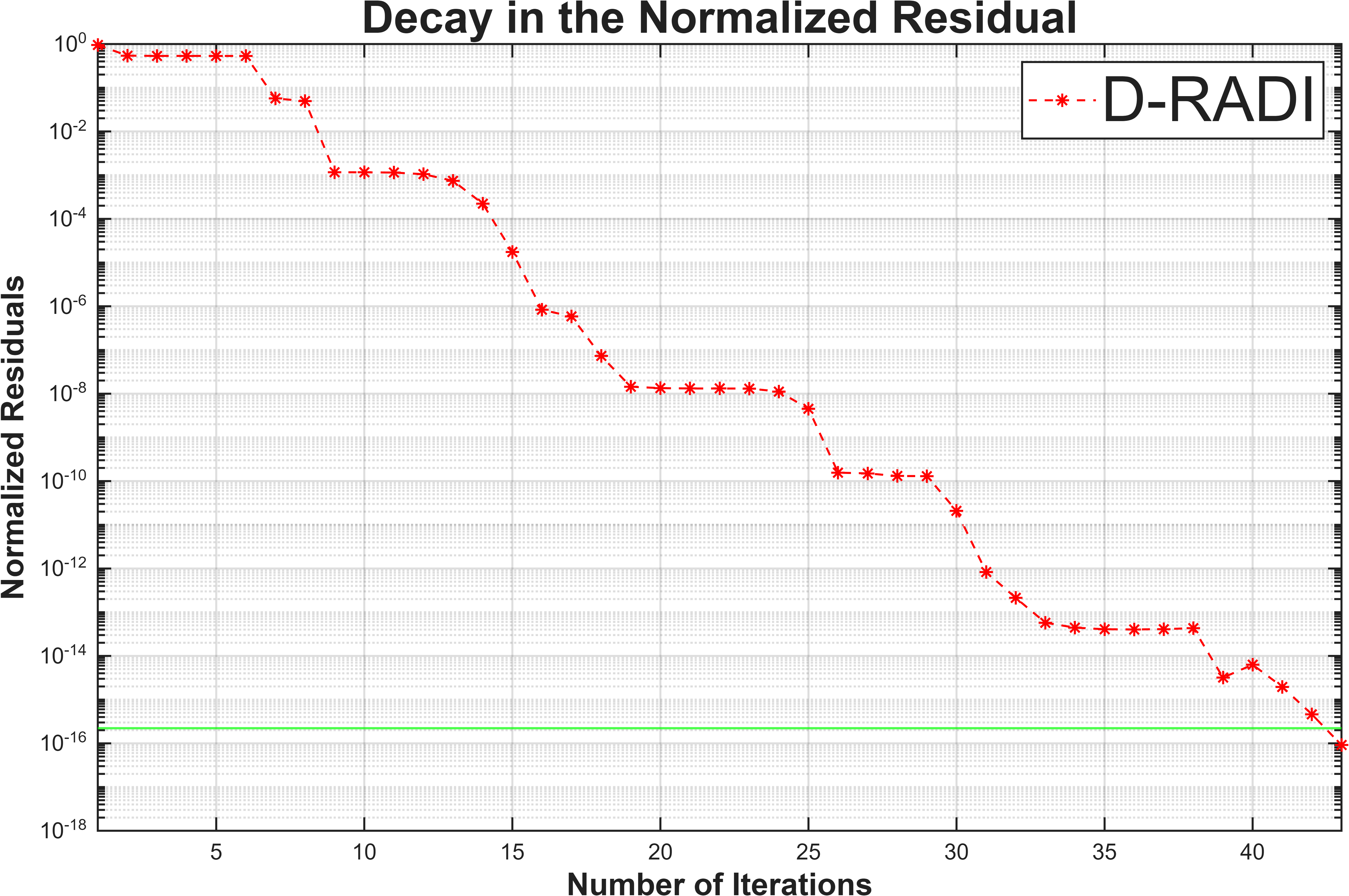}
  \caption{Decay of the normalized residual}
  \label{fig0}
\end{figure}
The relative differences between $Q$ and $K$ computed by MATLAB's \texttt{idare} routine and D-RADI are
\[
\dfrac{\|Q^{(\text{idare})}-Q^{(\text{d-radi})}\|_2}{\|Q^{(\text{idare})}\|_2}= 4.9627\times10^{-12}\quad\text{and}\quad \dfrac{\|K^{(\text{idare})}-K^{(\text{d-radi})}\|_2}{\|K^{(\text{idare})}\|_2}=1.0304\times10^{-12}.
\]
The $66$ nonzero singular values of $Q^{(\text{idare})}$ are compared with those of $Q^{(\text{d-radi})}$ in Figure \ref{fig1}.
\begin{figure}[!h]
  \centering
  \includegraphics[width=10cm]{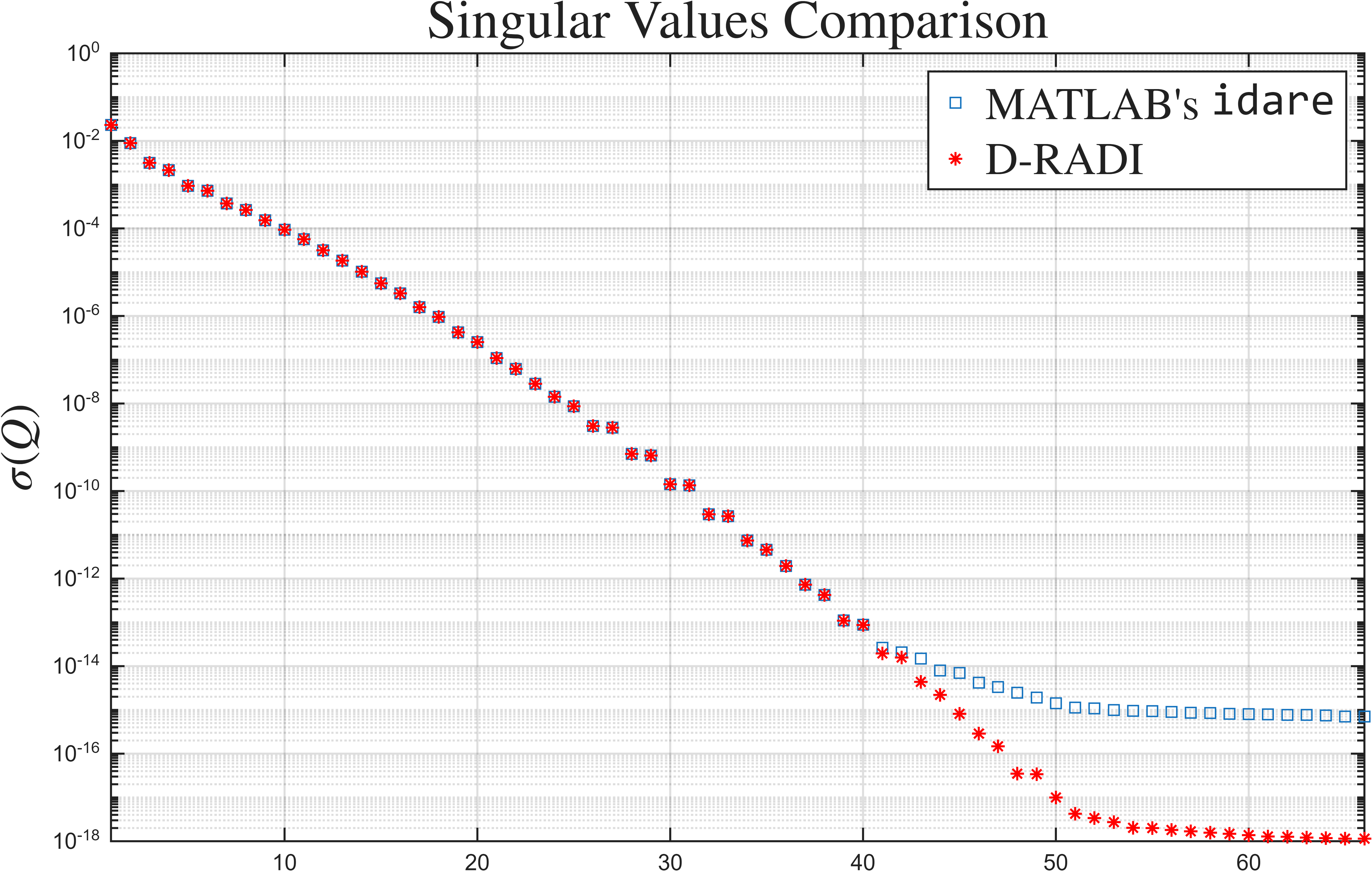}
  \caption{Singular Values Comparison}
  \label{fig1}
\end{figure}
These results show that D-RADI produces an accurate approximation.

\subsection{Example 2: Large-scale DARE}
The matrices $E\in\mathbb{R}^{10^6\times 10^6}$, $A\in\mathbb{R}^{10^6\times 10^6}$, $B\in\mathbb{R}^{10^6\times 1}$, and $\hat{C}\in\mathbb{R}^{2\times10^6}$ are the state-space matrices of the discrete-time procedural model constructed as described in \cite{zulfiqar2026}. The poles $\lambda(E^{-1}A)$ are plotted in Figure \ref{fig2}.
\begin{figure}[!h]
  \centering
  \includegraphics[width=10cm]{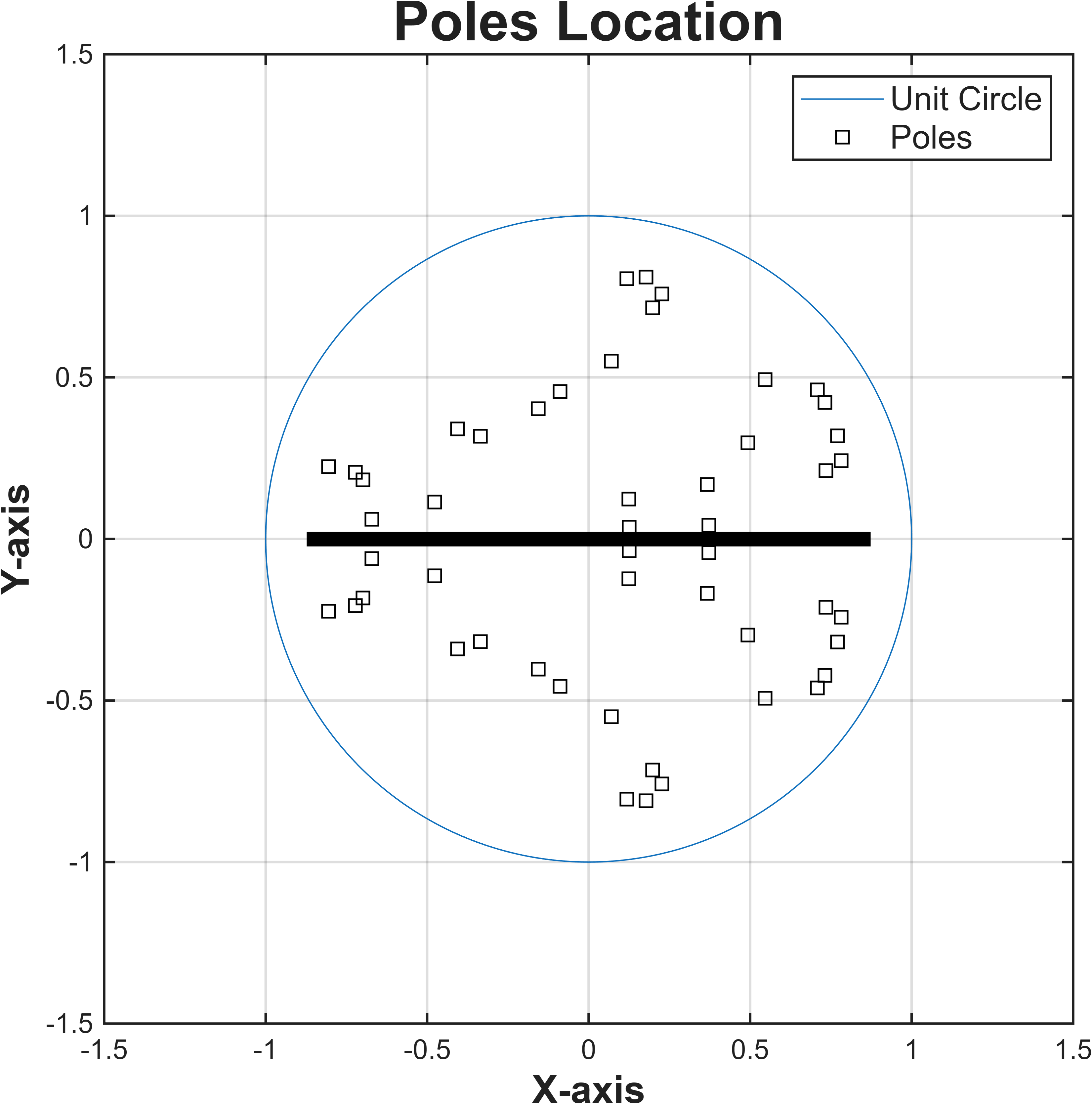}
  \caption{Pole locations}
  \label{fig2}
\end{figure}
The matrices $R$ and $Z$ are set as
\[
R = -0.0431\quad\text{and}\quad Z =-0.6045.
\]
The tolerance $\tau$ in D-RADI is set to $10^{-10}$. The maximum number of columns for $W_{\mathrm{proj}}$ is set to $10$. D-RADI converged in the $19^{\text{th}}$ iteration in $24.4192$ seconds. The normalized residual history is plotted in Figure \ref{fig3}.
\begin{figure}[!h]
  \centering
  \includegraphics[width=10cm]{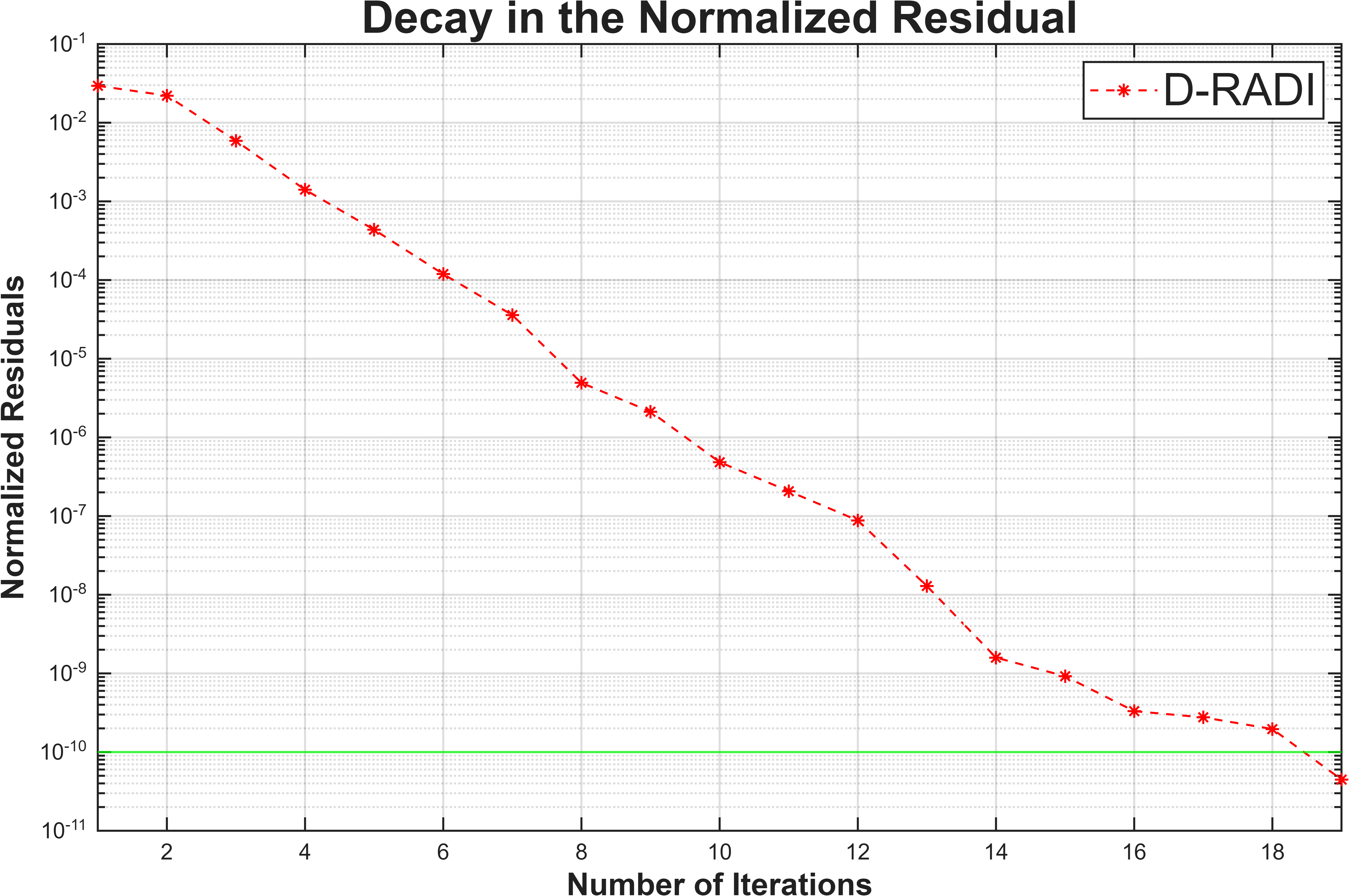}
  \caption{Decay of the normalized residual}
  \label{fig3}
\end{figure}
It can be seen that D-RADI produces an accurate approximation in under 25 seconds.

\subsection{Example 3: Large-scale DARE}
The matrices $E\in\mathbb{R}^{10^6\times 10^6}$, $A\in\mathbb{R}^{10^6\times 10^6}$, $B\in\mathbb{R}^{10^6\times 2}$, and $\hat{C}\in\mathbb{R}^{4\times10^6}$ are the state-space matrices of the discrete-time procedural model constructed as described in \cite{zulfiqar2026}. The poles $\lambda(E^{-1}A)$ are plotted in Figure \ref{fig4}.
\begin{figure}[!h]
  \centering
  \includegraphics[width=10cm]{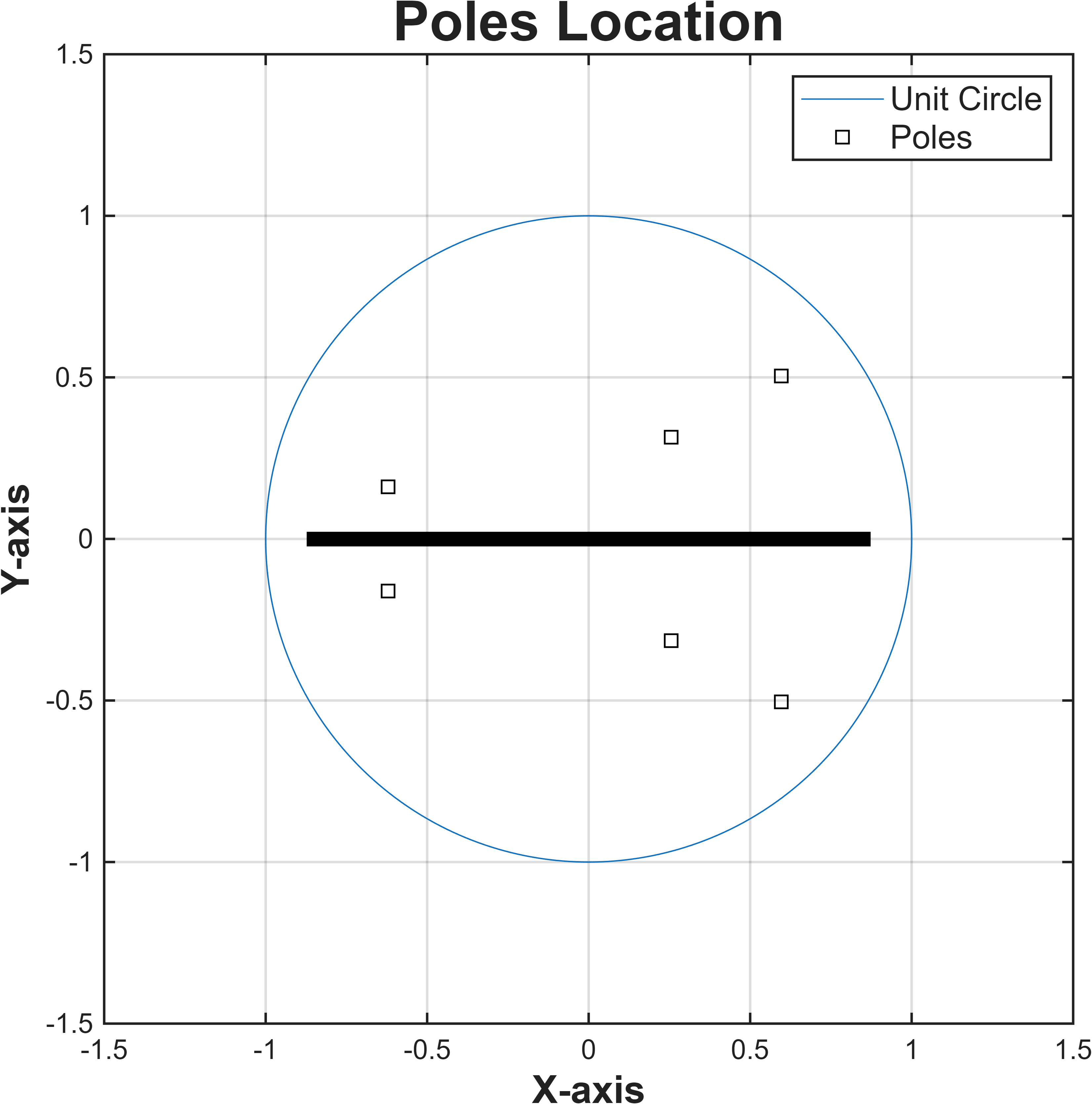}
  \caption{Pole locations}
  \label{fig4}
\end{figure}
The matrices $R$ and $Z$ are set as
\[
R = \begin{bmatrix}
0.1733&0.7136\\
0.7136&0.7243
\end{bmatrix}\quad\text{and}\quad Z =\begin{bmatrix}
0.5256&0.9860\\
0.9860&0.4559
\end{bmatrix}.
\]
The tolerance $\tau$ in D-RADI is set to $10^{-10}$. The maximum number of columns for $W_{\mathrm{proj}}$ is set to $20$. D-RADI converged in the $12^{\text{th}}$ iteration in $20.1618$ seconds. The normalized residual history is plotted in Figure \ref{fig5}.
\begin{figure}[!h]
  \centering
  \includegraphics[width=10cm]{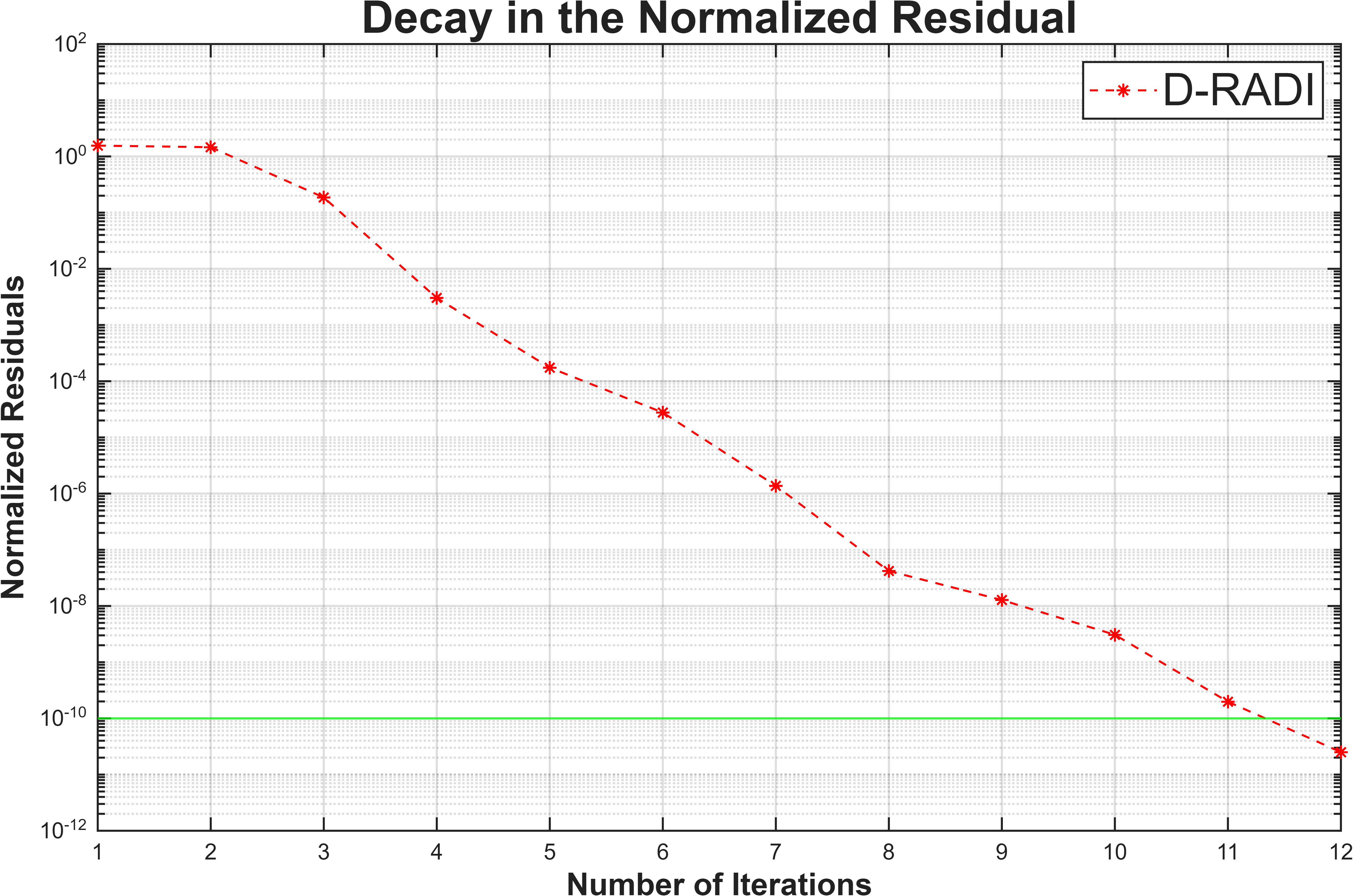}
  \caption{Decay of the normalized residual}
  \label{fig5}
\end{figure}
It can be seen that D-RADI produces an accurate approximation in about 20 seconds.
\section{Conclusion}
This paper addresses the numerical solution of large-scale DAREs by proposing a low-rank ADI algorithm, named D-RADI. D-RADI exploits the interpolatory nature of low-rank ADI algorithms. Despite being a projection-based method, D-RADI uses its pole placement property to ensure that the projected DARE admits a stabilizing solution throughout the ADI iterations. We also propose an efficient approach for automatic shift generation. This makes D-RADI fully autonomous: starting from an arbitrary shift, it generates subsequent shifts without user intervention until the residual reaches a prescribed tolerance. Numerical experiments show that D-RADI matches the accuracy of MATLAB's \texttt{idare} on moderate-order DAREs, while efficiently solving large-scale DAREs that are infeasible for \texttt{idare}. Overall, D-RADI is an effective and autonomous solver for computing low-rank solutions of large-scale DAREs.
\section*{Appendix}
\begin{proof}
1. From \eqref{lyap_x}, we get
\begin{equation}\label{eq:pf_stein2}
X_r^{(i)}+B_r^{(i)}R^{-1}(B_r^{(i)})^\top
=
(S_r^{(i)})^\top X_r^{(i)} S_r^{(i)}
-
(L_r^{(i)})^\top \hat Z L_r^{(i)}.
\end{equation}
For compactness, define
\[
M_r^{(i)} := (L_r^{(i)})^\top \hat Z L_r^{(i)},
\qquad
\Pi_r^{(i)} := X_r^{(i)}+B_r^{(i)}R^{-1}(B_r^{(i)})^\top .
\]
Introduce
\[
\Delta_r^{(i)}
:=
X_r^{(i)}
-
(S_r^{(i)})^{-\top}M_r^{(i)}(S_r^{(i)})^{-1}.
\]
It follows that
\[
\Pi_r^{(i)}
=
(S_r^{(i)})^\top \Delta_r^{(i)} S_r^{(i)},
\]
and hence
\[
(\Pi_r^{(i)})^{-1}
=
(S_r^{(i)})^{-1}(\Delta_r^{(i)})^{-1}(S_r^{(i)})^{-\top}.
\]

By the choice of the free parameter in the theorem,
\[
\hat C_r^{(i)}
=
L_r^{(i)}(S_r^{(i)})^{-1}(X_r^{(i)})^{-1}
=
L_r^{(i)}(S_r^{(i)})^{-1}Q_r^{(i)}.
\]
Therefore,
\[
(L_r^{(i)})^\top \hat Z \hat C_r^{(i)}
=
M_r^{(i)}(S_r^{(i)})^{-1}Q_r^{(i)}.
\]
Using the assumed identity
\[
\hat A_r^{(i)}
=
(S_r^{(i)})^\top
-
(L_r^{(i)})^\top \hat Z \hat C_r^{(i)},
\]
we obtain
\[
\hat A_r^{(i)}
=
(S_r^{(i)})^\top
-
M_r^{(i)}(S_r^{(i)})^{-1}Q_r^{(i)}.
\]
Furthermore,
\[
\Delta_r^{(i)}Q_r^{(i)}
=
X_r^{(i)}Q_r^{(i)}
-
(S_r^{(i)})^{-\top}M_r^{(i)}(S_r^{(i)})^{-1}Q_r^{(i)}
=
I
-
(S_r^{(i)})^{-\top}M_r^{(i)}(S_r^{(i)})^{-1}Q_r^{(i)}.
\]
Consequently,
\[
\hat A_r^{(i)}
=
(S_r^{(i)})^\top \Delta_r^{(i)}Q_r^{(i)}.
\]
Thus,
\begin{equation}\label{eq:pf_key}
(\Pi_r^{(i)})^{-1}\hat A_r^{(i)}
=
(S_r^{(i)})^{-1}(\Delta_r^{(i)})^{-1}(S_r^{(i)})^{-\top}
(S_r^{(i)})^\top \Delta_r^{(i)}Q_r^{(i)}
=
(S_r^{(i)})^{-1}Q_r^{(i)}.
\end{equation}

We first verify that \(Q_r^{(i)}=(X_r^{(i)})^{-1}\) satisfies the projected DARE \eqref{proj_dare}.

Since \(X_r^{(i)}\) and \(R+(B_r^{(i)})^\top (X_r^{(i)})^{-1}B_r^{(i)}\) are invertible, \(\Pi_r^{(i)}\) is invertible. Furthermore, because \(S_r^{(i)}\) is invertible and \(\Pi_r^{(i)}=(S_r^{(i)})^\top \Delta_r^{(i)} S_r^{(i)}\), the matrix \(\Delta_r^{(i)}\) is also invertible. By the Woodbury identity \cite{golub2013matrix},
\begin{equation}\label{eq:pf_woodbury}
(\Pi_r^{(i)})^{-1}
=
Q_r^{(i)}
-
Q_r^{(i)}B_r^{(i)}
\Bigl(R+(B_r^{(i)})^\top Q_r^{(i)}B_r^{(i)}\Bigr)^{-1}
(B_r^{(i)})^\top Q_r^{(i)}.
\end{equation}
Hence the residual of \eqref{proj_dare} can be written as
\[
\begin{aligned}
\mathcal R_r^{(i)}
&=
(\hat A_r^{(i)})^\top Q_r^{(i)}\hat A_r^{(i)}
-
Q_r^{(i)}-
(\hat A_r^{(i)})^\top Q_r^{(i)}B_r^{(i)}
\Bigl((B_r^{(i)})^\top Q_r^{(i)}B_r^{(i)}+R\Bigr)^{-1}
(B_r^{(i)})^\top Q_r^{(i)}\hat A_r^{(i)}+
(\hat C_r^{(i)})^\top \hat Z \hat C_r^{(i)} \\
&=
(\hat A_r^{(i)})^\top(\Pi_r^{(i)})^{-1}\hat A_r^{(i)}
-
Q_r^{(i)}
+
(\hat C_r^{(i)})^\top \hat Z \hat C_r^{(i)}.
\end{aligned}
\]
Using \eqref{eq:pf_key} and the symmetry of \(M_r^{(i)}\) and \(Q_r^{(i)}\), we find
\[
\begin{aligned}
(\hat A_r^{(i)})^\top(\Pi_r^{(i)})^{-1}\hat A_r^{(i)}
&=
(\hat A_r^{(i)})^\top (S_r^{(i)})^{-1}Q_r^{(i)} \\
&=
\Bigl(
S_r^{(i)}
-
Q_r^{(i)}(S_r^{(i)})^{-\top}M_r^{(i)}
\Bigr)
(S_r^{(i)})^{-1}Q_r^{(i)} \\
&=
Q_r^{(i)}
-
Q_r^{(i)}(S_r^{(i)})^{-\top}M_r^{(i)}(S_r^{(i)})^{-1}Q_r^{(i)}.
\end{aligned}
\]
Moreover,
\[
\begin{aligned}
(\hat C_r^{(i)})^\top \hat Z \hat C_r^{(i)}
&=
Q_r^{(i)}(S_r^{(i)})^{-\top}
(L_r^{(i)})^\top \hat Z L_r^{(i)}
(S_r^{(i)})^{-1}Q_r^{(i)} \\
&=
Q_r^{(i)}(S_r^{(i)})^{-\top}
M_r^{(i)}
(S_r^{(i)})^{-1}Q_r^{(i)}.
\end{aligned}
\]
Substituting these two identities into the expression for \(\mathcal R_r^{(i)}\) yields
\[
\mathcal R_r^{(i)}=0.
\]
Therefore, \(Q_r^{(i)}=(X_r^{(i)})^{-1}\) is a solution of the projected DARE \eqref{proj_dare}.

It remains to prove that this solution is stabilizing. Since
\[
\hat A_r^{(i)}
=
A_r^{(i)}-B_r^{(i)}R^{-1}C_{2,r}^{(i)},
\]
we have
\[
\begin{aligned}
&(B_r^{(i)})^\top Q_r^{(i)}A_r^{(i)}+C_{2,r}^{(i)} \\
&\qquad=
(B_r^{(i)})^\top Q_r^{(i)}\hat A_r^{(i)}
+
(B_r^{(i)})^\top Q_r^{(i)}B_r^{(i)}R^{-1}C_{2,r}^{(i)}
+
C_{2,r}^{(i)} \\
&\qquad=
(B_r^{(i)})^\top Q_r^{(i)}\hat A_r^{(i)}
+
\Bigl((B_r^{(i)})^\top Q_r^{(i)}B_r^{(i)}+R\Bigr)
R^{-1}C_{2,r}^{(i)}.
\end{aligned}
\]
Thus the feedback matrix in the theorem can be rewritten as
\[
\begin{aligned}
&\Bigl((B_r^{(i)})^\top Q_r^{(i)}B_r^{(i)}+R\Bigr)^{-1}
\Bigl((B_r^{(i)})^\top Q_r^{(i)}A_r^{(i)}+C_{2,r}^{(i)}\Bigr) \\
&\qquad=
R^{-1}C_{2,r}^{(i)}
+
\Bigl((B_r^{(i)})^\top Q_r^{(i)}B_r^{(i)}+R\Bigr)^{-1}
(B_r^{(i)})^\top Q_r^{(i)}\hat A_r^{(i)}.
\end{aligned}
\]
Therefore,
\[
\begin{aligned}
A_{r,cl}^{(i)}
&=
A_r^{(i)}
-
B_r^{(i)}
\Bigl((B_r^{(i)})^\top Q_r^{(i)}B_r^{(i)}+R\Bigr)^{-1}
\Bigl((B_r^{(i)})^\top Q_r^{(i)}A_r^{(i)}+C_{2,r}^{(i)}\Bigr) \\
&=
\hat A_r^{(i)}
-
B_r^{(i)}
\Bigl((B_r^{(i)})^\top Q_r^{(i)}B_r^{(i)}+R\Bigr)^{-1}
(B_r^{(i)})^\top Q_r^{(i)}\hat A_r^{(i)}.
\end{aligned}
\]
Using the Woodbury identity \cite{golub2013matrix} in the form
\[
I
-
B_r^{(i)}
\Bigl((B_r^{(i)})^\top Q_r^{(i)}B_r^{(i)}+R\Bigr)^{-1}
(B_r^{(i)})^\top Q_r^{(i)}
=
X_r^{(i)}(\Pi_r^{(i)})^{-1},
\]
and then applying \eqref{eq:pf_key}, we obtain
\[
A_{r,cl}^{(i)}
=
X_r^{(i)}(\Pi_r^{(i)})^{-1}\hat A_r^{(i)}
=
X_r^{(i)}(S_r^{(i)})^{-1}Q_r^{(i)}
=
X_r^{(i)}(S_r^{(i)})^{-1}(X_r^{(i)})^{-1}.
\]
Hence \(A_{r,cl}^{(i)}\) is similar to \((S_r^{(i)})^{-1}\). Since
\[
\lambda\bigl(S_r^{(i)}\bigr)=\{-\alpha_1,\ldots,-\alpha_i\},
\]
with each eigenvalue having multiplicity \(p+m\), it follows that
\[
\lambda\bigl(A_{r,cl}^{(i)}\bigr)
=
\lambda\bigl((S_r^{(i)})^{-1}\bigr)
=
\left\{
-\frac{1}{\alpha_1},\ldots,-\frac{1}{\alpha_i}
\right\},
\]
where each eigenvalue \(-1/\alpha_j\) has multiplicity \(p+m\). Because \(|\alpha_j|>1\) for all \(j=1,\ldots,i\), we have
\[
\left|-\frac{1}{\alpha_j}\right|<1.
\]
Thus all eigenvalues of \(A_{r,cl}^{(i)}\) lie strictly inside the unit circle. Consequently, \(Q_r^{(i)}=(X_r^{(i)})^{-1}\) is a stabilizing solution of the projected DARE \eqref{proj_dare}.

2. Recall that
\[
\Pi_r^{(i)}
=
X_r^{(i)}
+
B_r^{(i)}R^{-1}(B_r^{(i)})^\top .
\]
By the Woodbury identity,
\[
Q_r^{(i)}
-
Q_r^{(i)}B_r^{(i)}
\Bigl((B_r^{(i)})^\top Q_r^{(i)}B_r^{(i)}+R\Bigr)^{-1}
(B_r^{(i)})^\top Q_r^{(i)}
=
(\Pi_r^{(i)})^{-1}.
\]
Hence the residual can be written as
\begin{equation}
R_q^{(i)}
=
\hat{A}^\top W_r^{(i)}
(\Pi_r^{(i)})^{-1}
(W_r^{(i)})^\top \hat{A}
-
E^\top W_r^{(i)}Q_r^{(i)}(W_r^{(i)})^\top E
+
\hat{C}^\top \hat{Z}\hat{C}.
\label{eq:rq_Pi_part2}
\end{equation}

Using \eqref{sylv_W} and the definition
\[
\hat{C}_{\perp}^{(i)}
=
\hat{C}
-
\hat{C}_r^{(i)}(W_r^{(i)})^\top E,
\]
we have
\begin{equation}
\hat{A}^\top W_r^{(i)}
=
E^\top W_r^{(i)}(\hat{A}_r^{(i)})^\top
-
(\hat{C}_{\perp}^{(i)})^\top \hat{Z}L_r^{(i)}
\label{eq:consistency_part2}
\end{equation}
and therefore
\begin{equation}
(W_r^{(i)})^\top \hat{A}
=
\hat{A}_r^{(i)}(W_r^{(i)})^\top E
-
(L_r^{(i)})^\top \hat{Z}\hat{C}_{\perp}^{(i)} .
\label{eq:consistency_transpose_part2}
\end{equation}
Moreover,
\[
\hat{C}
=
\hat{C}_{\perp}^{(i)}
+
\hat{C}_r^{(i)}(W_r^{(i)})^\top E.
\]
Substituting \eqref{eq:consistency_part2} and \eqref{eq:consistency_transpose_part2} into \eqref{eq:rq_Pi_part2}, expanding, and grouping the terms gives
\begin{equation}
\begin{aligned}
R_q^{(i)}
&=
E^\top W_r^{(i)}
\Bigl[
(\hat{A}_r^{(i)})^\top
(\Pi_r^{(i)})^{-1}
\hat{A}_r^{(i)}
-
Q_r^{(i)}
+
(\hat{C}_r^{(i)})^\top \hat{Z}\hat{C}_r^{(i)}
\Bigr]
(W_r^{(i)})^\top E\\
&\quad
+
E^\top W_r^{(i)}
\Bigl[
(\hat{C}_r^{(i)})^\top
-
(\hat{A}_r^{(i)})^\top
(\Pi_r^{(i)})^{-1}
(L_r^{(i)})^\top
\Bigr]
\hat{Z}\hat{C}_{\perp}^{(i)}\\
&\quad
+
(\hat{C}_{\perp}^{(i)})^\top \hat{Z}
\Bigl[
\hat{C}_r^{(i)}
-
L_r^{(i)}
(\Pi_r^{(i)})^{-1}
\hat{A}_r^{(i)}
\Bigr]
(W_r^{(i)})^\top E\\
&\quad
+
(\hat{C}_{\perp}^{(i)})^\top \hat{Z}
\Bigl[
I_{pm}
+
L_r^{(i)}
(\Pi_r^{(i)})^{-1}
(L_r^{(i)})^\top \hat{Z}
\Bigr]
\hat{C}_{\perp}^{(i)} .
\end{aligned}
\label{eq:rq_collected_part2}
\end{equation}
The bracket in the first line is the residual of the projected DARE \eqref{proj_dare}, written using the Woodbury identity, and therefore vanishes.

We now simplify the cross-terms. From \eqref{lyap_x} and
\[
L_r^{(i)}
=
\hat{C}_r^{(i)}X_r^{(i)}S_r^{(i)},
\]
we obtain
\[
\Pi_r^{(i)}
=
\hat{A}_r^{(i)}X_r^{(i)}S_r^{(i)} .
\]
Thus
\[
(\Pi_r^{(i)})^{-1}\hat{A}_r^{(i)}
=
(S_r^{(i)})^{-1}Q_r^{(i)} .
\]
Consequently,
\[
L_r^{(i)}
(\Pi_r^{(i)})^{-1}
\hat{A}_r^{(i)}
=
\hat{C}_r^{(i)}X_r^{(i)}S_r^{(i)}
(S_r^{(i)})^{-1}Q_r^{(i)}
=
\hat{C}_r^{(i)}X_r^{(i)}Q_r^{(i)}
=
\hat{C}_r^{(i)} .
\]
Since \(\Pi_r^{(i)}\) is symmetric, taking the transpose yields
\[
(\hat{A}_r^{(i)})^\top
(\Pi_r^{(i)})^{-1}
(L_r^{(i)})^\top
=
(\hat{C}_r^{(i)})^\top .
\]
Therefore the two cross-terms in \eqref{eq:rq_collected_part2} vanish, and
\begin{equation}
R_q^{(i)}
=
(\hat{C}_{\perp}^{(i)})^\top \hat{Z}
\Bigl[
I_{pm}
+
L_r^{(i)}
(\Pi_r^{(i)})^{-1}
(L_r^{(i)})^\top \hat{Z}
\Bigr]
\hat{C}_{\perp}^{(i)} .
\label{eq:rq_middle_part2}
\end{equation}

It remains to simplify the middle matrix. Using again
\[
L_r^{(i)}
=
\hat{C}_r^{(i)}X_r^{(i)}S_r^{(i)},
\]
we have
\[
\Pi_r^{(i)}
=
(S_r^{(i)})^\top X_r^{(i)}
\Bigl[
I
-
(\hat{C}_r^{(i)})^\top \hat{Z}\hat{C}_r^{(i)}X_r^{(i)}
\Bigr]
S_r^{(i)} .
\]
Hence
\[
L_r^{(i)}
(\Pi_r^{(i)})^{-1}
(L_r^{(i)})^\top
=
\hat{C}_r^{(i)}X_r^{(i)}
\Bigl[
I
-
(\hat{C}_r^{(i)})^\top \hat{Z}\hat{C}_r^{(i)}X_r^{(i)}
\Bigr]^{-1}
(\hat{C}_r^{(i)})^\top .
\]
Define
\[
M^{(i)}
:=
I_{pm}
-
\hat{C}_r^{(i)}X_r^{(i)}
(\hat{C}_r^{(i)})^\top \hat{Z}.
\]
Direct multiplication shows that
\[
M^{(i)}
\hat{C}_r^{(i)}X_r^{(i)}
\Bigl[
I
-
(\hat{C}_r^{(i)})^\top \hat{Z}\hat{C}_r^{(i)}X_r^{(i)}
\Bigr]^{-1}
(\hat{C}_r^{(i)})^\top
=
\hat{C}_r^{(i)}X_r^{(i)}
(\hat{C}_r^{(i)})^\top .
\]
Thus
\[
L_r^{(i)}
(\Pi_r^{(i)})^{-1}
(L_r^{(i)})^\top
=
(M^{(i)})^{-1}
\hat{C}_r^{(i)}X_r^{(i)}
(\hat{C}_r^{(i)})^\top .
\]
Since
\[
\hat{C}_r^{(i)}X_r^{(i)}
(\hat{C}_r^{(i)})^\top \hat{Z}
=
I_{pm}-M^{(i)},
\]
we obtain
\[
\begin{aligned}
I_{pm}
+
L_r^{(i)}
(\Pi_r^{(i)})^{-1}
(L_r^{(i)})^\top \hat{Z}
&=
I_{pm}
+
(M^{(i)})^{-1}
\hat{C}_r^{(i)}X_r^{(i)}
(\hat{C}_r^{(i)})^\top \hat{Z}\\
&=
I_{pm}
+
(M^{(i)})^{-1}
(I_{pm}-M^{(i)})\\
&=
(M^{(i)})^{-1}.
\end{aligned}
\]
Substituting this into \eqref{eq:rq_middle_part2} yields
\[
R_q^{(i)}
=
(\hat{C}_{\perp}^{(i)})^\top
\hat{Z}
(M^{(i)})^{-1}
\hat{C}_{\perp}^{(i)} .
\]

Moreover, since
\[
(W_r^{(i)})^\top E V_r^{(i)}=I,
\qquad
\hat{C}V_r^{(i)}=\hat{C}_r^{(i)},
\]
we have
\[
\hat{C}_{\perp}^{(i)}V_r^{(i)}
=
\hat{C}V_r^{(i)}
-
\hat{C}_r^{(i)}(W_r^{(i)})^\top E V_r^{(i)}
=
\hat{C}_r^{(i)}-\hat{C}_r^{(i)}
=
0.
\]
Therefore,
\[
(V_r^{(i)})^\top R_q^{(i)}V_r^{(i)}
=
0.
\]

Finally, from
\[
X_r^{(i)}
=
\begin{bmatrix}
X_r^{(i-1)}&0\\
0&x_r^{(i)}
\end{bmatrix},
\qquad
\hat{C}_r^{(i)}
=
\begin{bmatrix}
\hat{C}_r^{(i-1)}&\hat{c}_r^{(i)}
\end{bmatrix},
\]
we obtain
\[
\hat{C}_r^{(i)}X_r^{(i)}(\hat{C}_r^{(i)})^\top
=
\hat{C}_r^{(i-1)}X_r^{(i-1)}(\hat{C}_r^{(i-1)})^\top
+
\hat{c}_r^{(i)}x_r^{(i)}(\hat{c}_r^{(i)})^\top .
\]
Hence
\[
\begin{aligned}
M^{(i)}
&=
I_{pm}
-
\hat{C}_r^{(i)}X_r^{(i)}(\hat{C}_r^{(i)})^\top \hat{Z}\\
&=
I_{pm}
-
\hat{C}_r^{(i-1)}X_r^{(i-1)}(\hat{C}_r^{(i-1)})^\top \hat{Z}
-
\hat{c}_r^{(i)}x_r^{(i)}(\hat{c}_r^{(i)})^\top \hat{Z}\\
&=
M^{(i-1)}
-
\hat{c}_r^{(i)}x_r^{(i)}(\hat{c}_r^{(i)})^\top \hat{Z},
\end{aligned}
\]
with \(M^{(0)}=I_{pm}\).

3. Consider
\begin{align}
& \hat{A}^\top W_r^{(i)}-E^\top W_r^{(i)} (\hat{A}_r^{(i)})^\top+(\hat{C}_{\perp}^{(i)})^\top \hat{Z}L_r^{(i)}\nonumber\\
&=\hat{A}^\top W_r^{(i)}-E^\top W_r^{(i)} \big(S_r^{(i)}-(\hat{C}_r^{(i)})^\top\hat{Z}L_r^{(i)}\big)+\big(\hat{C}-\hat{C}_r^{(i)}(W_r^{(i)})^\top E\big)^\top \hat{Z}L_r^{(i)}\nonumber\\
&=\hat{A}^\top W_r^{(i)}-E^\top W_r^{(i)}S_r^{(i)}+\hat{C}^\top\hat{Z}L_r^{(i)}\nonumber\\
&=0.\nonumber
\end{align}
This completes the proof.
\end{proof}
%\bibliography{mybibfile}

\end{document}